\documentclass[11pt]{article}

\usepackage[margin=1in]{geometry}
\usepackage[T1]{fontenc}
\usepackage[utf8]{inputenc}
\usepackage{lmodern}
\usepackage{microtype}
\usepackage{amsmath,amssymb,amsthm,mathtools}
\usepackage{enumitem}
\usepackage{titlesec}
\usepackage{comment}
\usepackage[colorlinks=true,linkcolor=blue,citecolor=blue,urlcolor=blue]{hyperref}

\setlist{nosep,leftmargin=*}
\allowdisplaybreaks
\titlespacing*{\section}{0pt}{2ex plus .5ex minus .2ex}{1ex plus .2ex}
\titlespacing*{\subsection}{0pt}{1.4ex plus .4ex minus .2ex}{.7ex plus .2ex}

\theoremstyle{plain}
\newtheorem{theorem}{Theorem}[section]
\newtheorem{lemma}[theorem]{Lemma}

\newtheorem{problem}[theorem]{Problem}
\theoremstyle{definition}

\newtheorem{remark}[theorem]{Remark}

\newcommand{\cE}{\mathcal{E}}
\newcommand{\one}{\mathbf{1}}

\title{Covering the ternary cube by binary subcubes}
\author{Peiru Kuang\footnote{School of Mathematical Sciences, Shanghai Jiao Tong University, Shanghai 200240, China. Supported by SJTU-Warwick Joint Seed Fund. Email: peiru\_k@sjtu.edu.cn}
\and
Yan Wang\footnote{School of Mathematical Sciences, Shanghai Jiao Tong University, Shanghai 200240, China. Supported by National Key R\&D Program of China under Grant No. 2022YFA1006400, National Natural Science Foundation of China under Grant No. 12571376 and SJTU-Warwick Joint Seed Fund. Email: yan.w@sjtu.edu.cn (corresponding author).}}
\date{}

\begin{document}
\maketitle
\begin{abstract}
For an integer $n\ge0$, let $f(n)$ be the minimum number of subcubes of $\mathbb{Z}_3^n$ of the form $A_1\times\cdots\times A_n$, where $|A_i|=2$ for every $i$, whose union covers $\mathbb{Z}_3^n$. A simple counting argument gives $f(n)\ge(3/2)^n$, while  $f(n)=O(n(3/2)^n)$ by random construction. 
We prove that $f(n)\le2(3/2)^n-1$, answering a problem of Imre Leader. 
We also show that $f(n)/(3/2)^n$ is nondecreasing and there exists a constant
$C_3$ such that $f(n)=(C_3+o(1))(3/2)^n$ where
$1.62227<C_3\le2$.
\end{abstract}

\section{Introduction}

For an integer $n$, let $[n]=\{1,\cdots, n\}$. A \emph{binary subcube} of
$\mathbb{Z}_3^n$ is a set of the form
$A_1\times\cdots\times A_n$, where $A_i\in\binom{\mathbb{Z}_3}2$ for every $i\in[n]$.
Let $f(n)$ denote the minimum size of a cover of $\mathbb{Z}_3^n$ by binary subcubes. The following problem was posed by Imre Leader, and communicated to us by Oleg Pikhurko.

\begin{problem}[Leader]
     Determine the order of magnitude of $f(n)$.
\end{problem}

For $a\in\mathbb{Z}_3^n$, let
$Q_a=\{x\in\mathbb{Z}_3^n:x_i\ne a_i\text{ for every }i\in[n]\}$. Since $|Q_a|=2^n$, a simple counting argument gives
\begin{equation}\label{eq:intro-lower}
 f(n)\ge \left\lceil\frac{3^n}{2^n}\right\rceil
 \ge \left(\frac32\right)^n.
\end{equation}

A random construction or a greedy argument gives an upper bound that $f(n)\le (\ln 3+o(1))n\left(\frac32\right)^n$.
A slightly stronger bound follows by formulating the problem as a hypergraph covering problem (also as coverings of groups by translates, see Bollob\'as, Janson and Riordan~\cite{BollobasJansonRiordan2011}).
For a hypergraph $\mathcal G$, a \emph{cover} is a family
$\mathcal C\subseteq E(\mathcal G)$ such that
$V(\mathcal G)=\bigcup_{C\in\mathcal C}C$,
and the covering number $k(\mathcal G)$ is the minimum size of such a family. A \emph{fractional cover} of $\mathcal G$ is a function
$w:E(\mathcal G)\to\mathbb R_{\ge0}$ such that
$\sum_{\substack{C\in E(\mathcal G), v\in C}}w(C)\ge1$ for every $v\in V(\mathcal G)$.
The fractional covering number is defined by
$
k_f(\mathcal G)
=
\min\left\{
\sum_{C\in E(\mathcal G)}w(C):
w\text{ is a fractional cover of }\mathcal G
\right\}.
$
The relation between integral and fractional covers was studied in the classical work of Johnson~\cite{Johnson1974} and Lov\'asz~\cite{Lovasz1975}; see also Berge~\cite{Berge1978} and
Scheinerman and Ullman~\cite{ScheinermanUllman}.
Let $\mathcal H_n$ be the hypergraph with vertex set $\mathbb{Z}_3^n$ and edge set $\{Q_a:a\in\mathbb{Z}_3^n\}$. 
The covering estimates of Johnson~\cite{Johnson1974},
Lov\'asz~\cite{Lovasz1975} and Stein~\cite{Stein1974} now give
\begin{equation}\label{eq:intro-jls}
f(n)=k(\mathcal H_n)
\le
(1+\ln 2^n)k_f(\mathcal H_n)
=
(1+n\ln 2)\left(\frac32\right)^n.
\end{equation}
Thus these estimates leave a gap of a factor of order $n$ between
\eqref{eq:intro-lower} and \eqref{eq:intro-jls}.

In this paper, we determine $f(n)$ up to a constant factor and show that the limit of  $\frac{f(n)}{(3/2)^n}$ exists.

\begin{theorem}\label{thm:main}
For every integer $n\ge0$,
$$
f(n)\le2\left(\frac32\right)^n-1.
$$
Moreover, the sequence
$f(n)/(3/2)^n$ is nondecreasing. Thus, the limit
$C_3=\lim_{n\to\infty}\frac{f(n)}{(3/2)^n}$
exists and $1.62227<C_3\le2$.
\end{theorem}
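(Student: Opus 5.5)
The plan splits into three parts: monotonicity, the upper bound by an explicit recursive construction with a strengthened induction hypothesis, and the numerical lower bound on $C_3$. Throughout, a cover by binary subcubes is encoded as a set $S\subseteq\mathbb{Z}_3^n$ with $\bigcup_{a\in S}Q_a=\mathbb{Z}_3^n$. Equivalently, every $x$ has some $a\in S$ with $a_i\ne x_i$ for all $i$.

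\emph{Monotonicity.} Let $S$ be an optimal cover of $\mathbb{Z}_3^{n+1}$. For $c\in\mathbb{Z}_3$ put $S_c=\{a\in S:a_{n+1}=c\}$ and $m_c=|S_c|$. The layer $\{x:x_{n+1}=t\}$ is covered only by the $Q_a$ with $a_{n+1}\ne t$. Projecting those $a$ to the first $n$ coordinates gives a cover of $\mathbb{Z}_3^n$, so $|S|-m_t\ge f(n)$ for each $t$. Summing over $t\in\mathbb{Z}_3$ gives $2|S|\ge 3f(n)$, that is, $f(n+1)\ge\frac32 f(n)$. Hence $f(n)/(3/2)^n$ is nondecreasing. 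Once the upper bound is proved the sequence is bounded by $2$, so the limit $C_3$ exists and $C_3\le 2$.

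\emph{Upper bound.} The same layer decomposition read backwards gives the construction. If $A,B,C\subseteq\mathbb{Z}_3^n$ are families such that the union of any two of them is a cover, then
\[
S=A\times\{0\}\cup B\times\{1\}\cup C\times\{2\}
\]
covers $\mathbb{Z}_3^{n+1}$. For example, the diagonal $\{00,11,22\}$ arises this way with $A=\{0\},B=\{1\},C=\{2\}$, and it shows $f(2)=3$. The simple choices (one of $A,B,C$ empty) only give $f(n+1)\le 2f(n)$. So I would strengthen the induction hypothesis to the quantity $g(n)=$ the minimum of $|A|+|B|+|C|$ over such ``$2$-out-of-$3$'' triples, and find a recursion for $g$ itself. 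The idea is to split each of $A,B,C$ in the next dimension into three parts in the same pattern, and to use a symmetric (cyclic in $\mathbb{Z}_3$) choice so that parts are shared. The target is $g(n+1)\le\frac32 g(n)+O(1)$ with the right additive constant. From this I would deduce $f(n)+1\le 2(3/2)^n$ by induction. The base cases are $f(0)=1$, $f(1)=2$, $f(2)=3$, and the additive $-1$ is carried through as $h(n)=f(n)+1$. Getting this strengthened hypothesis exactly right, so that it closes under the recursion with constant $2$ rather than something larger, is the step I expect to be the main obstacle. My first attempt would be to guess the hypothesis from exact small optimal covers computed by hand or by computer for $n\le 4$.

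\emph{Lower bound on $C_3$.} By monotonicity, $C_3\ge f(n)/(3/2)^n$ for every fixed $n$. So it suffices to certify a lower bound on $f(n)$ for one moderate $n$ with $f(n)>1.62227\,(3/2)^n$. I would do this by computer: an integer program for the covering problem on $\mathcal H_n$, with the symmetry group (coordinate permutations and per-coordinate permutations of $\mathbb{Z}_3$) used to cut the search down. Alternatively, I would strengthen the layer argument above by applying it to two coordinates at once, which gives inequalities of the form $f(n+2)\ge\alpha f(n)+\beta$ that can be iterated from exact small values.
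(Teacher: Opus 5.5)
\textbf{There is a genuine gap in the upper bound.} Your monotonicity argument is correct; it is the paper's projection inequality applied one coordinate at a time. The problem is the upper bound $f(n)\le 2(3/2)^n-1$, which is the heart of the theorem: you give no construction, and the route you sketch cannot produce one as stated.

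The quantity $g(n)$, as you define it, is not a strengthening. Your own layer argument splits any cover of $\mathbb{Z}_3^{n+1}$ into a triple $(A,B,C)$ with $|A|+|B|+|C|$ equal to its size. So $g(n)=f(n+1)$ exactly. The hoped-for recursion $g(n+1)\le\frac32 g(n)+O(1)$ is therefore just $f(n+2)\le\frac32 f(n+1)+O(1)$. That is a claim about consecutive optimal values, and nothing in your plan or in the paper supports it.

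In the form your bookkeeping suggests, $h(n+1)\le\frac32 h(n)$ with $h=f+1$, it reads $f(n+1)\le\frac32 f(n)+\frac12$. Together with $f(n+1)\ge\lceil 3f(n)/2\rceil$ and integrality, this would force $f(n)=L_n$ for all $n$, where $L_1=2$ and $L_{n+1}=\lceil 3L_n/2\rceil$. That would give $C_3=\lambda\approx1.62227$ exactly, which the paper leaves open. So fitting an inductive hypothesis to small optimal covers aims at something far stronger than the theorem.

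The missing idea is a global algebraic construction. Fix a graph $G$ on the coordinates. For each even vertex set $S$ (every vertex of $G[S]$ has even degree), set $a^S_v=0$ for $v\in S$, $a^S_v=1$ for $v\notin S$ with $d_S(v)$ odd, and $a^S_v=2$ otherwise. Given $x$, one solves an $\mathbb{F}_2$-linear parity system on $G[\{v:x_v\ne0\}]$; the system is consistent by the handshake lemma. The solution is an even $S$ with $x\in Q_{a^S}$, so $f(n)\le|\mathcal{E}(G)|$. For $G\sim G(n,1/2)$ and nonempty $S$ one has $\Pr(S\in\mathcal{E}(G))=2^{1-|S|}$. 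Hence $\mathbb{E}|\mathcal{E}(G)|=2(3/2)^n-1$.

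\textbf{The lower bound is also incomplete.} An integer program certifying $f(n)>1.62227\,(3/2)^n$ at a moderate $n$ presupposes that $f(n)$ beats the rounding bound $L_n$ at some computable $n$. The known value $f(6)=18=L_6$ gives only $18/(3/2)^6\approx1.580$. The ratio $L_n/(3/2)^n$ itself first exceeds $1.62227$ only at $n=30$, far beyond any search over $\mathbb{Z}_3^n$.

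No search is needed. Keep the ceiling in your own inequality $2|S|\ge 3f(n)$, that is, $f(n+1)\ge\lceil 3f(n)/2\rceil$, and iterate from $f(1)=2$. This gives $f(n)\ge L_n$ and $C_3\ge L_{100}/(3/2)^{100}>1.62227$, exactly as in the paper.
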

It remains open to determine the exact value of $C_3$. We do not try to optimize it in this paper.

Theorem~\ref{thm:main} has a natural application to the total domination in direct products of graphs. 
A set \(D\subseteq V(G)\) is a \emph{total dominating set} of a graph \(G\) if \(N_G(v)\cap D\ne\varnothing\) for every \(v\in V(G)\).  
The minimum cardinality of such a set is the \emph{total domination number} \(\gamma_t(G)\).  
The graph \(K_q^{\times n}\) has vertex set \([q]^n\), with two vertices adjacent if and only if they differ in every coordinate.
Thus, after identifying $[3]$ with $\mathbb Z_3$, the open neighborhood of $a\in\mathbb Z_3^n$ is precisely $Q_a$, and hence $f(n)=\gamma_t(K_3^{\times n})$.
Theorem~\ref{thm:main} therefore determines the total domination number of \(K_3^{\times n}\)
within a factor of two. Total domination in direct products was initiated by Rall~\cite{Rall2005} and was subsequently studied by Dorbec, Gravier, Klav\v{z}ar and
\v{S}pacapan~\cite{DorbecGravierKlavzarSpacapan2006}. 
Direct products of complete graphs were considered further by Meki\v{s}~\cite{Mekis2010}. 
Vemuri~\cite{Vemuri2020} observed that the available bounds are far from tight when many of the factors are small and their number is large.
The powers \(K_3^{\times n}\) provide a basic example of this regime. Since \(K_2^{\times n}\) is a perfect matching, \(K_3^{\times n}\) is the first nontrivial case.

More recently, Adriaensen, Ihringer, Martin and
Villagr\'an~\cite{AdriaensenIhringerMartinVillagran} studied \(K_q^{\times n}\) using skirting sets, motivated by a Hamming-ball lemma of Alon, Ben-Eliezer, Shangguan and Tamo~\cite{AlonBenEliezerShangguanTamo}. 
For \(x,y\in[q]^n\), let
\(d_H(x,y)=|\{i\in[n]:x_i\ne y_i\}|\) denote their Hamming distance. A set \(S\subseteq[q]^n\) is a \textit{skirting set} if, for every \(y\in[q]^n\), there exists \(x\in S\) such that \(d_H(x,y)=n\). 
Let \(f(n,q)\) denote the minimum cardinality of a skirting set in \([q]^n\). The definition immediately gives $f(n,q)=\gamma_t(K_q^{\times n})$,
and in particular \(f(n)=f(n,3)\). Combining their equality \(f(6,3)=18\) with our upper bound yields $41\le f(8,3)\le 50$, improving their bounds \(29\le f(8,3)\le 54\).

A closely related but different problem concerns
partitions of discrete boxes. If $A=A_1\times\cdots\times A_d$ is a finite discrete box, a sub-box $B=B_1\times\cdots\times B_d$ is called \emph{proper} if $B_i\ne A_i$ for every $i\in[d]$. Alon, Bohman, Holzman and Kleitman \cite{AlonBohmanHolzmanKleitman} proved that every partition of a $d$-dimensional discrete box into proper sub-boxes has at least $2^d$ parts. Further variants were studied by Buci\'c, Lidick\'y, Long and Wagner~\cite{BucicLidickyLongWagner}. The disjointness condition is essential here. Indeed, for $n\ge2$, Theorem~\ref{thm:main} gives a cover of $\mathbb{Z}_3^n$ by fewer than $2^n$ binary subcubes, whereas every partition of $\mathbb{Z}_3^n$ into proper sub-boxes has at least $2^n$ parts. Thus allowing overlaps (from partition to cover) changes the required number of subsets from $2^n$ to $(3/2)^n$.

We conclude the introduction with a brief outline of the proof. Let $G$ be a graph on the set of coordinates, and let $\cE(G)$ be the family of sets $S\subseteq V(G)$ for which every vertex of $G[S]$ has even degree. We associate a binary subcube with every set in $\cE(G)$ and prove that these subcubes cover $\mathbb Z_3^{V(G)}$. 
In Section 2, we show that $f(n)\le |\cE(G)|$
for every graph $G$ on $n$ vertices. Then we take $G\sim G(n,1/2)$. For every fixed nonempty set
$S\subseteq[n]$ of size $s$, we have $\Pr\bigl(S\in\cE(G)\bigr)=2^{-(s-1)}$.
Therefore, we obtain
$
 \mathbb E|\cE(G)|
 =1+\sum_{s=1}^n\binom ns2^{-(s-1)}
 =2\left(\frac32\right)^n-1,
$
which gives the upper bound in Theorem~\ref{thm:main} by averaging.
For the lower bound, we first prove the projection inequality that $f(n)\ge \left(\frac32\right)^{n-k}f(k)$ for $0\le k\le n$.
This shows that $f(n)/(3/2)^n$ is nondecreasing. Finally, retaining the integer rounding in the projection inequality and iterating give $C_3>1.622270502884767$.

\section{Even induced subgraphs}
The upper bound will be obtained from an auxiliary graph. Let \(G\) be a graph whose vertex set indexes the coordinates of the ternary cube.
To each \(S\subseteq V(G)\) for which \(G[S]\) has only even degrees, we associate a vector \(a^S\in\mathbb{Z}_3^{V(G)}\). The key observation is that the subcubes \(Q_{a^S}\) obtained in this way cover \(\mathbb{Z}_3^{V(G)}\). 
Therefore, it remains to choose \(G\) with few induced subgraphs having only even degrees.

Let $G=(V,E)$ be a finite simple graph. For $S\subseteq V$ and $v\in V$,
let
\(
d_S(v)=|N_G(v)\cap S|.
\)
We call $S$ an \emph{even vertex set} if every vertex of $G[S]$ has even degree; equivalently, $d_S(v)\equiv0\pmod 2$ for every $v\in S$. Let
$
\cE(G)=\{S\subseteq V: S\text{ is an even vertex set}\}$.
We first need the following parity lemma to select an appropriate even vertex set.
\begin{lemma}\label{lem:parity}
Let $H$ be a finite graph, and let $B\subseteq V(H)$. There exists
$S\subseteq V(H)$ such that 
$d_S(v)\equiv0\pmod2$ for every $v\in V(H)\setminus B$ and $d_S(v)+\one_S(v)\equiv1\pmod2$ for every $v\in B$, where $\one_S$ denotes the indicator function of $S$.
\end{lemma}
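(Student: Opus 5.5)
This is a linear system over $\mathbb F_2$. Let $A$ be the adjacency matrix of $H$ over $\mathbb F_2$, let $x=\one_S\in\mathbb F_2^{V(H)}$, and let $D_B$ be the diagonal matrix with $1$ exactly on the vertices of $B$. Then $d_S(v)=(Ax)_v$, so the required conditions say
\[
(A+D_B)x=\one_B .
\]
I would prove solvability by showing that $\one_B$ lies in the column space of the symmetric matrix $M=A+D_B$.

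Since $M$ is symmetric over $\mathbb F_2$, its column space is the orthogonal complement of its kernel. So it suffices to show that $y^\top\one_B=0$ for every $y\in\ker M$.

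Take $y=\one_T$ with $My=0$. Then
\[
0=y^\top M y=y^\top A y+y^\top D_B y .
\]
Because $A$ is symmetric with zero diagonal, $y^\top A y=\sum_{u\ne v}A_{uv}y_uy_v=2\sum_{u<v}A_{uv}y_uy_v\equiv 0 \pmod 2$. Also $y^\top D_By=\sum_{v\in B}y_v^2=\sum_{v\in B}y_v=y^\top\one_B$. Hence $y^\top\one_B=0$, as required.

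The key step is the identity $y^\top My=y^\top\operatorname{diag}(M)$ for symmetric $M$ over $\mathbb F_2$. This is the classical fact that the diagonal of a symmetric matrix over $\mathbb F_2$ always lies in its column space, as in Gallai's theorem on even/odd vertex partitions. Once it is noted, the rest is routine. Finally, I would translate the solution $x$ back to $S=\{v:x_v=1\}$ and verify the two congruences directly: $d_S(v)\equiv 0$ for $v\notin B$, and $d_S(v)+\one_S(v)\equiv 1$ for $v\in B$.
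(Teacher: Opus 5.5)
Your proof is correct and is essentially the paper's argument in matrix language. The paper assumes the system is inconsistent and takes the set $T$ of equations summing to $0\equiv1\pmod 2$ (that is, $\one_T^\top(A+D_B)=0$ but $\one_T^\top\one_B=1$). It rules this out by the handshake lemma in $H[T]$, which is exactly your computation $y^\top Ay\equiv0$ forcing $y^\top\one_B=y^\top(A+D_B)y=0$; your $\mathrm{col}(M)=(\ker M)^\perp$ phrasing is just a direct, non-contradiction form of the same argument.
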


\begin{proof}
For each \(u\in V(H)\), write
\(s_u=\one_S(u)\in\mathbb Z_2\).
We seek a set \(S\subseteq V(H)\) such that the following holds. 
$$
\sum_{u\in N_H(v)}s_u \equiv 0 \pmod2 \quad (v\notin B),
\qquad
s_v+\sum_{u\in N_H(v)}s_u\equiv 1 \pmod2 \quad (v\in B).
$$
Such a solution gives the desired set
\(S=\{u\in V(H):s_u=1\}\).

Suppose that the system has no solution. Then there exists a collection of its
equations that adds to the contradiction \(0\equiv1 \pmod2 \). Let
\(T\subseteq V(H)\) be the set of vertices that indexes these equations.

For \(w\in T\), the variable \(s_w\) occurs once for each neighbor of \(w\) in \(T\), and once more if \(w\in B\). Since its total coefficient is zero on the left-hand side, \(|N_H(w)\cap T|\) is odd if and only if \(w\in B\). Thus, the odd-degree vertices of \(H[T]\) are precisely the vertices of \(T\cap B\).
On the other hand, the sum of the right-hand sides is
\(|T\cap B|\) modulo \(2\), and hence \(|T\cap B|\) is odd. This
contradicts the fact that every graph has an even number of
odd-degree vertices. Therefore, the system has a solution.
\end{proof}

For each $S\in\cE(G)$, define $a^S\in\mathbb{Z}_3^V$ by
\begin{equation}\label{eq:def-aS}
a^S_v=
\begin{cases}
0,&v\in S,\\
1,&v\notin S\text{ and }d_S(v)\text{ is odd},\\
2,&v\notin S\text{ and }d_S(v)\text{ is even}.
\end{cases}
\end{equation}
Define
$$
\mathcal A_G
=
\{a^S:S\in\mathcal E(G)\}
\subseteq\mathbb{Z}_3^V.
$$
We next show that the vectors \(a^S\) arising from even vertex sets give a cover of the ternary cube.
\begin{lemma}\label{lem:graph-cover}
For every finite graph $G=(V,E)$, the family
$
\{Q_{a^S}:S\in\mathcal E(G)\}
$
covers $\mathbb{Z}_3^V$. Thus, $|\mathcal A_G|=|\cE(G)|$ and $f(n)\le |\cE(G)|$ if $|V|=n$.
\end{lemma}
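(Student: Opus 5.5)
The plan is to show directly that every $x\in\mathbb{Z}_3^V$ lies in $Q_{a^S}$ for some $S\in\cE(G)$, by choosing $S$ through Lemma~\ref{lem:parity} applied to a suitable induced subgraph. Fix $x$ and partition $V$ as $Z=\{v:x_v=0\}$, $U=\{v:x_v=1\}$ and $B=\{v:x_v=2\}$. We need $a^S_v\ne x_v$ for every $v$. Since $a^S_v=0$ exactly when $v\in S$, this forces $S\cap Z=\varnothing$. With that in place, the requirements become:
\begin{itemize}
\item for $v\in U$: either $v\in S$, or $v\notin S$ and $d_S(v)$ is even;
\item for $v\in B$: either $v\in S$, or $v\notin S$ and $d_S(v)$ is odd.
\end{itemize}
In addition, $S$ itself must be an even vertex set.

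Apply Lemma~\ref{lem:parity} to $H=G[V\setminus Z]=G[U\cup B]$ with the distinguished set $B$. This yields $S\subseteq U\cup B$, so $S\cap Z=\varnothing$ automatically. Because $S\subseteq V(H)$, the degree $d_S(v)$ computed in $H$ agrees with the one computed in $G$ for every $v\in V(H)$. Now check each vertex, splitting according to membership in $S$.
\begin{itemize}
\item For $v\in U$ the lemma gives $d_S(v)$ even. If $v\in S$ this is the evenness condition at $v$ and $a^S_v=0\ne1$. If $v\notin S$ then $a^S_v=2\ne1$.
\item For $v\in B$ the lemma gives $d_S(v)+\one_S(v)$ odd. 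If $v\in S$ then $d_S(v)$ is even, which again gives the evenness condition, and $a^S_v=0\ne2$. If $v\notin S$ then $d_S(v)$ is odd, so $a^S_v=1\ne2$.
\item For $v\in Z$ we have $v\notin S$, so $a^S_v\in\{1,2\}$ and hence $a^S_v\ne0=x_v$.
\end{itemize}
Every vertex of $S$ therefore has even degree in $G[S]$, so $S\in\cE(G)$ and $x\in Q_{a^S}$.

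For the counting statement, the map $S\mapsto a^S$ is injective, since $S$ is recovered as the zero set of $a^S$. Hence $|\mathcal A_G|=|\cE(G)|$, and the covering just established gives $f(n)\le|\cE(G)|$ whenever $|V|=n$.

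The only delicate step is the choice of $H$ and $B$. One must delete the zero coordinates so that $S$ avoids them, and one must check that the single parity condition $d_S(v)+\one_S(v)\equiv1$ on $B$ handles both the in-$S$ and out-of-$S$ cases at once. Once that encoding is set up, the verification is the case check above.
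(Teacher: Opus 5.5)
Your proof is correct and matches the paper's argument essentially line for line. Both apply Lemma~\ref{lem:parity} to $G[V_1\cup V_2]$ with $B=V_2$ (your $U\cup B$ with $B$ the coordinates equal to $2$), run the same case check to get $S\in\cE(G)$ and $a^S_v\ne x_v$, and recover injectivity from $S=\{v:a^S_v=0\}$.
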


\begin{proof}
The map $S\mapsto a^S$ is injective, since
$
S=\{v\in V:a^S_v=0\}.
$
Thus $|\mathcal A_G|=|\cE(G)|$.
It remains to prove the covering property. Fix $x\in\mathbb{Z}_3^V$, and let $V_i=\{v\in V:x_v=i\}$ for $i\in\{0,1,2\}$.
Let $Y=V_1\cup V_2$ and $H=G[Y]$. Apply Lemma~\ref{lem:parity} to $H$ with $B=V_2$. We obtain $S\subseteq Y$ such that
\begin{align}
d_S(v)&\equiv0\pmod2 &&(v\in V_1),\label{eq:cover-v1}\\
d_S(v)+\one_S(v)&\equiv1\pmod2 &&(v\in V_2).\label{eq:cover-v2}
\end{align}
Since $S\subseteq Y$, the quantities $d_S(v)$ are the same whether they are computed in $H$ or in $G$.

We first verify that $S\in\cE(G)$. If $v\in S\cap V_1$, then
\eqref{eq:cover-v1} gives $d_S(v)\equiv0\pmod2$. If $v\in S\cap V_2$,
then $\one_S(v)=1$, so \eqref{eq:cover-v2} gives
$d_S(v)\equiv0\pmod2$. Thus every vertex of $G[S]$ has even degree.

Now we show that $a^S\neq x$ coordinate-wise.
If $v\in V_0$, then $v\notin S$, so $a^S_v\in\{1,2\}$ and
$a^S_v\ne x_v$.
If $v\in V_1$, then either $v\in S$ and $a^S_v=0$, or $v\notin S$
and \eqref{eq:cover-v1} gives $a^S_v=2$. Hence $a^S_v\ne x_v$.
If $v\in V_2$, then either $v\in S$ and $a^S_v=0$, or $v\notin S$
and \eqref{eq:cover-v2} gives that $d_S(v)$ is odd, so $a^S_v=1$. Hence
$a^S_v\ne x_v$.

Therefore $a^S_v\ne x_v$ for every $v\in V$, so $x\in Q_{a^S}$. Since $x$ can be chosen arbitrarily, the family
$
\{Q_{a^S}:S\in\mathcal E(G)\}
$
covers $\mathbb Z_3^V$.
\end{proof}

\section{Upper bound}
\begin{proof}[Proof of the upper bound in Theorem~\ref{thm:main}]
We now choose \(G\sim G(n,1/2)\) on the vertex set \([n]\).
For a nonempty set \(S\subseteq[n]\), let \(s=|S|\). We claim that
$$
\Pr\bigl(S\in\mathcal E(G)\bigr)=2^{-(s-1)}.
$$
Indeed, fix a spanning tree \(T\) of the complete graph on \(S\), and
condition on all edges of \(G[S]\) outside \(T\). Choose an arbitrary vertex as root. Starting from the leaves and proceeding towards the
root, we consider each non-root vertex \(v\). At the moment when \(v\) is
processed, all edges incident with \(v\), except possibly the edge
joining \(v\) to its parent, have already been determined. Hence that
parent edge has a unique value which makes the degree of \(v\) even.

This uniquely determines all \(s-1\) tree edges. The degree of the root
is then also even, since every graph has an even number of vertices of
odd degree. Thus, for every fixed choice of the non-tree edges, exactly
one of the \(2^{s-1}\) choices of the tree edges makes all degrees in
\(G[S]\) even. Therefore
$
\Pr\bigl(S\in\mathcal E(G)\bigr)=2^{-(s-1)}.
$

Hence, by linearity of expectation, we have
\[
\begin{aligned}
\mathbb E |\cE(G)|
&=\sum_{S\subseteq[n]}\Pr\bigl(S\in\mathcal E(G)\bigr)
=1+\sum_{s=1}^n\binom ns2^{-(s-1)}
=1+2\sum_{s=1}^n\binom ns2^{-s}\\
&=1+2\left(\left(1+\frac12\right)^n-1\right)
=2\left(\frac32\right)^n-1.
\end{aligned}
\]
Thus, there exists a graph \(G\) on \([n]\) such that
$
|\cE(G)|\le 2\left(\frac32\right)^n-1.
$
By Lemma~\ref{lem:graph-cover},
$$
f(n)\le |\cE(G)|\le 2\left(\frac32\right)^n-1,
$$
as required.
\end{proof}

\section{Lower bound}

We begin with a projection inequality, which yields the lower bound and shows that the sequence \(f(n)/(3/2)^n\) is nondecreasing.

\begin{lemma}\label{lem:projection}
For all integers $0\le k\le n$,
$$
f(n)\ge \left(\frac32\right)^{n-k}f(k).
$$
\end{lemma}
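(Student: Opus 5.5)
Fix an optimal cover $\mathcal C=\{Q_{a^{(1)}},\dots,Q_{a^{(m)}}\}$ of $\mathbb{Z}_3^n$ with $m=f(n)$. Split the coordinates as $[n]=[k]\cup([n]\setminus[k])$ and write each $a^{(j)}=(b^{(j)},c^{(j)})$ with $b^{(j)}\in\mathbb{Z}_3^k$ and $c^{(j)}\in\mathbb{Z}_3^{n-k}$. Then $Q_{a^{(j)}}=Q_{b^{(j)}}\times Q_{c^{(j)}}$. The plan is a double-counting (averaging) argument over the last $n-k$ coordinates: for each $y\in\mathbb{Z}_3^{n-k}$, the fibre $\mathbb{Z}_3^k\times\{y\}$ must be covered. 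The subcubes meeting this fibre are exactly those with $y\in Q_{c^{(j)}}$, and their traces on the fibre are the subcubes $Q_{b^{(j)}}$ of $\mathbb{Z}_3^k$. Hence $\{Q_{b^{(j)}}: y\in Q_{c^{(j)}}\}$ is a cover of $\mathbb{Z}_3^k$, and so
$$\bigl|\{j: y\in Q_{c^{(j)}}\}\bigr|\ge f(k)\quad\text{for every }y\in\mathbb{Z}_3^{n-k}.$$

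Now sum this over all $3^{n-k}$ choices of $y$. On the left, each index $j$ is counted once for each $y\in Q_{c^{(j)}}$, that is, exactly $|Q_{c^{(j)}}|=2^{n-k}$ times. This gives
$$m\cdot 2^{n-k}=\sum_{y}\bigl|\{j:y\in Q_{c^{(j)}}\}\bigr|\ge 3^{n-k}f(k),$$
which rearranges to $f(n)=m\ge (3/2)^{n-k}f(k)$, as claimed.

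There is no real obstacle; the only point needing care is the observation that restricting to a fibre preserves the binary-subcube structure. This follows because the $Q_a$ are product sets, so each trace is either empty or a full $Q_{b}$ in $\mathbb{Z}_3^k$. The edge cases $k=0$ (where $f(0)=1$ and this is the counting bound \eqref{eq:intro-lower}) and $k=n$ are trivially included.

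It is also worth recording, for the later refinement to $C_3>1.6222$, that the argument in fact gives $f(n)\ge (3/2)^{n-k}\,\bigl\lceil\cdot\bigr\rceil$-type improvements. One can retain integrality by applying the bound with $k$ and $n-k$ in both roles, or with $f(k)$ replaced by an exact small value. Applying the inequality with $k=n-1$ yields $f(n)/(3/2)^n\ge f(n-1)/(3/2)^{n-1}$, i.e. the monotonicity stated in Theorem~\ref{thm:main}.
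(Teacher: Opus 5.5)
Your proof is correct and is essentially the paper's argument: for each $y\in\mathbb{Z}_3^{n-k}$, the cubes whose last $n-k$ coordinates avoid $y$ project to a cover of $\mathbb{Z}_3^k$, and double counting (each cube lies over exactly $2^{n-k}$ fibres) gives $3^{n-k}f(k)\le 2^{n-k}f(n)$. Your closing remark on integrality is loosely stated but not needed for the lemma; the paper's later refinement simply applies the lemma to the pair $(n+1,n)$ and uses that $f(n+1)$ is an integer, which gives $f(n+1)\ge\lceil 3f(n)/2\rceil$.
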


\begin{proof}
Let $\mathcal A\subseteq\mathbb Z_3^n$ be such that
$\{Q_a:a\in\mathcal A\}$ covers $\mathbb{Z}_3^n$. 
Fix a set $I\subseteq[n]$ of size $k$, and let $J=[n]\setminus I$. 
For each
$y\in\mathbb Z_3^J$, let
$
N_y=
\left|
\left\{a\in\mathcal A:a_j\ne y_j\text{ for every }j\in J\right\}
\right|.
$
The projections onto $I$ of the vectors counted by $N_y$ form a cover of $\mathbb Z_3^I$. Indeed, for each $x\in\mathbb Z_3^I$, the point whose restrictions to $I$ and $J$ are $x$ and $y$, respectively, is covered by some $Q_a$. This vector $a$ is counted by $N_y$, and its projection onto $I$ differs from $x$ in every coordinate. It follows that $N_y\ge f(k)$ for every $y\in\mathbb{Z}_3^J$.

On the other hand, each $a\in\mathcal A$ is counted by exactly
$2^{n-k}$ of the numbers $N_y$. Therefore
$$
3^{n-k}f(k)
\le \sum_{y\in\mathbb{Z}_3^J}N_y
=2^{n-k}|\mathcal A|.
$$
Taking $|\mathcal A|=f(n)$ proves the lemma. 
\end{proof}




The projection inequality shows that the sequence $f(n)/(3/2)^n$ is nondecreasing. Together with the upper bound, this implies that the limit $C_3=\lim_{n\to\infty}f(n)/(3/2)^n$ exists and satisfies $C_3\le2$.

We next improve the lower bound by retaining the integer rounding in the projection inequality. Let $L_1=2$ and $L_{n+1}=\lceil3L_n/2\rceil$ for $n\ge1$. Since $f(1)=2$, Lemma~\ref{lem:projection} and induction give $f(n)\ge L_n$ for every $n\ge1$. Iterating the recurrence gives
$L_{100}=659552205933476787$, and hence
$C_3\ge L_{100}/(3/2)^{100}
=1.6222705028847673143\ldots>1.62227$.
This completes the proof of Theorem~\ref{thm:main}.

\begin{remark}
The integer iteration above has a limiting value. Indeed, recall that $L_{n+1}=\lceil 3L_n/2\rceil$ and
$a_n=L_n/(3/2)^n$. If $L_n$ is even, then
$L_{n+1}=3L_n/2$, and hence $a_{n+1}=a_n$. If $L_n$ is odd, then $L_{n+1}=(3L_n+1)/2=3L_n/2+1/2$, and therefore $a_{n+1}-a_n=\frac12(2/3)^{n+1}$. Then $a_{n+1}-a_n$ is either $0$ or $\frac12(2/3)^{n+1}$, according as $L_n$ is even or odd. Hence $(a_n)$ is nondecreasing and converges to a limit $\lambda$. Moreover, for every $N\ge1$, we have $0\le\lambda-a_N\le(2/3)^N$. Computing the recurrence up to $N=1000$ and using this error bound gives
$\lambda=1.622270502884767315956950982899324\ldots$.
Thus the integer iteration alone yields $C_3\ge\lambda$.
\end{remark}


\section{Concluding remarks}

Our proof uses a parity construction that is specific to the ternary setting. It would be interesting to understand the corresponding problem for larger alphabets.

For integers $q\geq2$ and $n\geq1$, and for $a\in\mathbb{Z}_q^n$, define
$
Q_a^{(q)}
=\{x\in\mathbb{Z}_q^n:x_i\ne a_i\text{ for every }i\in[n]\}
=\prod_{i=1}^n\bigl(\mathbb{Z}_q\setminus\{a_i\}\bigr)
$
and
$
f_q(n)
=\min\left\{
|\mathcal A|:
\mathcal A\subseteq\mathbb{Z}_q^n
\text{ and }
\mathbb{Z}_q^n=\bigcup_{a\in\mathcal A}Q_a^{(q)}
\right\}.
$
Thus $f_3(n)=f(n)$ while the case $q=2$ is trivial.

Each $Q_a^{(q)}$ has size $(q-1)^n$. Moreover, the associated covering
hypergraph is $(q-1)^n$-uniform and $(q-1)^n$-regular, so its fractional
covering number is exactly
$
\left(\frac{q}{q-1}\right)^n.
$
Thus,
$
f_q(n)\geq
\left\lceil
\left(\frac{q}{q-1}\right)^n
\right\rceil,
$
whereas the Johnson--Lov\'asz--Stein estimate gives
$
f_q(n)
\leq
\bigl(1+n\ln(q-1)\bigr)
\left(\frac{q}{q-1}\right)^n.
$
It is therefore natural to define
$
C_q
=
\sup_{n\geq1}
\frac{f_q(n)}
{\left(q/(q-1)\right)^n}.
$
Theorem~\ref{thm:main} shows that $C_3\leq2$.

\begin{problem}
Is $C_q$ finite for every fixed $q\geq3$? Moreover, can $C_q$ be bounded by an absolute constant independent of $q$?
\end{problem}

\section*{Acknowledgements}
We thank Oleg Pikhurko for bringing the problem to our attention. We also thank Jun Gao for a helpful discussion.

\section*{Declaration on the use of AI} 
Generative AI tools were used to simplify some proofs and check arguments. All mathematical results and proofs were independently reviewed and verified by the authors, who take full responsibility for the content of this work.


\begin{thebibliography}{99}

\bibitem{AdriaensenIhringerMartinVillagran}
S. Adriaensen, F. Ihringer, W. J. Martin, and R. R. Villagr\'an,
\emph{Skirting the \(n\)-tuples},
arXiv:2602.01080, 2026.

\bibitem{AlonBohmanHolzmanKleitman}
N. Alon, T. Bohman, R. Holzman, and D. J. Kleitman,
\emph{On partitions of discrete boxes},
Discrete Math. \textbf{257} (2002), no.~2--3, 255--258.

\bibitem{AlonBenEliezerShangguanTamo}
N. Alon, O. Ben-Eliezer, C. Shangguan, and I. Tamo,
\emph{The hat guessing number of graphs},
J. Combin. Theory Ser. B \textbf{144} (2020), 119--149.

\bibitem{Berge1978}
C. Berge,
\emph{Fractional Graph Theory},
ISI Lecture Notes \textbf{1},
Macmillan of India, 1978.

\bibitem{BollobasJansonRiordan2011}
B. Bollob\'as, S. Janson, and O. Riordan,
\emph{On covering by translates of a set},
Random Structures Algorithms \textbf{38} (2011), no.~1--2, 33--67.

\bibitem{BucicLidickyLongWagner}
M. Buci\'c, B. Lidick\'y, J. Long, and A. Z. Wagner,
\emph{Partition problems in high dimensional boxes},
J. Combin. Theory Ser. A \textbf{166} (2019), 315--336.

\bibitem{DorbecGravierKlavzarSpacapan2006}
P. Dorbec, S. Gravier, S. Klav\v{z}ar, and S. \v{S}pacapan,
\emph{Some results on total domination in direct products of graphs},
Discuss. Math. Graph Theory \textbf{26} (2006), no.~1, 103--112.


\bibitem{Johnson1974}
D. S. Johnson,
\emph{Approximation algorithms for combinatorial problems},
J. Comput. System Sci. \textbf{9} (1974), 256--278.


\bibitem{Lovasz1975}
L. Lov\'asz,
\emph{On the ratio of optimal integral and fractional covers},
Discrete Math. \textbf{13} (1975), no.~4, 383--390.

\bibitem{Mekis2010}
G. Meki\v{s},
\emph{Lower bounds for the domination number and the total domination
number of direct product graphs},
Discrete Math. \textbf{310} (2010), 3310--3317.

\bibitem{Rall2005}
D. F. Rall,
\emph{Total domination in categorical products of graphs},
Discuss. Math. Graph Theory \textbf{25} (2005), no.~1--2, 35--44.

\bibitem{ScheinermanUllman}
E. R. Scheinerman and D. H. Ullman,
\emph{Fractional Graph Theory: A Rational Approach to the Theory of
Graphs},
Wiley-Interscience Series in Discrete Mathematics and Optimization,
John Wiley \& Sons, New York, 1997.

\bibitem{Stein1974}
S. K. Stein,
\emph{Two combinatorial covering theorems},
J. Combin. Theory Ser. A \textbf{16} (1974), 391--397.

\bibitem{Vemuri2020}
H. Vemuri,
\emph{Domination in direct products of complete graphs},
Discrete Appl. Math. \textbf{285} (2020), 473--482.



\end{thebibliography}
\end{document}